\renewcommand{\footnoterule}{\vfill\kern -3pt \hrule width 0.4\columnwidth \kern 2.6pt}
\titleformat{\section}{\center\normalsize\bfseries}{\thesection. }{0ex}{}
\titleformat{\subsection}{\normalsize\bfseries}{\hspace{3em}\thesubsection. }{0pt}{}
\titleformat{\subsubsection}[runin]{\normalsize\bfseries}{\thesubsection}{0.5em}{}[.]
\setlist{topsep=0.5ex,itemsep=0ex,parsep=0ex,leftmargin=3em,listparindent=\parindent}
\newtheoremstyle{thmstyle}{}{}{\itshape}{}{\bfseries}{. }{0pt}{\thmnumber{#2.\ }\thmname{#1}\thmnote{\textnormal{ (#3)}}}
\newtheoremstyle{defnstyle}{}{}{}{}{\bfseries}{.}{ }{\thmnumber{#2.\ }\thmname{#1}\thmnote{\textnormal{ (#3)}}}
\newtheoremstyle{thingstyle}{}{}{}{}{\bfseries}{. }{0pt}{}
\theoremstyle{thmstyle}
\newtheorem{thm}{Theorem}
\newtheorem{lem}[thm]{Lemma}
\theoremstyle{defnstyle}
\newtheorem{rmk}[thm]{Remark}
\theoremstyle{thingstyle}
\newcommand{\mc}{\mathcal}
\newcommand{\mr}{\mathrm}
\newcommand{\tit}{\textit}
\newcommand{\tn}{\textnormal}
\newcommand{\ol}{\overline}
\newcommand{\F}{\mathbb{F}}
\newcommand{\Q}{\mathbb{Q}}
\newcommand{\Z}{\mathbb{Z}}
\newcommand{\ceq}{\coloneqq}
\newcommand{\cln}{\colon}
\newcommand{\onto}{\mathrel{\mathrlap{\rightarrow}\mkern-2.25mu\rightarrow}}
\newcommand{\ov}{\,|\,}
\numberwithin{thm}{section}
\newtheoremstyle{discussionstyle}{}{}{}{}{\bfseries}{. }{0pt}{\thmname{#1}\thmnumber{#2}\thmnote{\textnormal{ (#3)}}}
\theoremstyle{discussionstyle}
\newtheorem*{acknowledgements}{Acknowledgements}
\newtheorem*{ontheproof}{On the proof}
\newtheorem*{notation}{Notation}
\newcommand{\Ab}{\mathrm{ab}}
\newcommand{\cG}{\mathcal{G}}
\newcommand{\nsolv}{{n\textnormal{-}\mathrm{solv}}}
\newcommand{\solv}{\mathrm{solv}}
\newcommand{\Bibkeyhack}[3]{}
\title[Torsion over solvable extensions]{Torsion of Abelian varieties over solvable extensions of number fields}
\author{Jake Huryn}
\address{The Ohio State University}
\email{huryn.5@osu.edu}
\date{}
\keywords{Torsion of Abelian varieties, Galois representations}
\subjclass{11G10, 14G27, 14K15}
\begin{document}
\begin{abstract}
Let $K$ be a number field, and let $A$ be an Abelian variety over $K$ which has no CM isogeny-factors over $\overline{K}$.
We prove that $A$ has only finitely many torsion points over the maximal $n$-step-solvable extension of $K$ for any $n$ and only finitely many torsion points of prime order over the maximal prosolvable extension of $K$.
\end{abstract}

\maketitle

\section{Introduction}

\noindent
Let $A$ be an Abelian variety over a number field $K$.
In this note, we study the torsion of Abelian varieties over certain large algebraic extensions of $K$.

As an example of such a result, Ribet has shown that $A$ has only finitely many torsion points over the field obtained by adjoining all roots of unity to $K$ \cite[Appendix]{katz-lang}.

To get finiteness over larger extensions of $K$, it is often necessary to make some assumptions on the endomorphisms of $A$.
Indeed, if $A$ has CM over $K$, i.e.\ $\mr{End}_K(A)\otimes\Q$ includes a semisimple $\Q$-subalgebra of dimension $2\dim(A)$, then every torsion point of $A$ is defined over $K^\Ab$, the maximal Abelian extension of $K$.
On the other hand, Zarhin has shown that $A$ is isogenous to a product of $K$-simple Abelian varieties none of which have CM over $K$ if and only if $A$ has only finitely many torsion points over $K^\Ab$ \cite[Corollary to Theorem 1]{zarhin}.

Our main theorem sharpens Zarhin's result (with one caveat---see Remark \ref{rmk}(a)).
Let $K^\nsolv$, for $n\geq0$, be the composite of all $n$-step-solvable extensions of $K$, and let $K^\solv$ be the composite of all solvable extensions of $K$, so that $K^\solv=\bigcup_{n\geq0}K^\nsolv$.

\begin{thm}
\label{thm:main}
Let $A$ be an Abelian variety over a number field $K$.
Assume that $A_{\ol K}$ is isogenous to a product of non-CM simple Abelian varieties.
Then
\begin{enumerate}
\item
$A$ has only finitely many torsion points over $K^\nsolv$ for al{}l $n$.
\item
$A$ has only finitely many torsion points of prime order over $K^\solv$.
\end{enumerate}
\end{thm}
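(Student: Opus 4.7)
The plan is to study the $\ell$-adic Galois representation $\rho_\ell\cln G_K\to\tn{GL}(V_\ell A)$ at each prime $\ell$. For any algebraic extension $L/K$, the torsion group $A(L)_{\tn{tors}}$ is finite if and only if (I) $(V_\ell A)^{G_L}=0$ for every prime $\ell$, and (II) $A[\ell](L)=0$ for all but finitely many $\ell$. For part (a), I take $L=K^{\nsolv}$, so $G_L=G_K^{(n)}$ is the $n$-th derived subgroup of $G_K$; for part (b) only (II) is needed, with $L=K^\solv$ and $G_L=\bigcap_nG_K^{(n)}$. After a harmless finite extension of $K$, which does not affect the $n$-step-solvable or prosolvable closures, I may assume $A$ is $K$-isogenous to a product of geometrically simple non-CM factors and reduce to a single such factor.

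For (I), I would combine Bogomolov's theorem with the structure of the Zariski closure. Let $\ol G_\ell\subseteq\tn{GL}(V_\ell A)$ be this closure; by Bogomolov, $\rho_\ell(G_K)$ is open in $\ol G_\ell(\Q_\ell)$. Faltings' semisimplicity theorem gives that $\ol G_\ell^0$ is reductive, and the non-CM hypothesis, combined with the irreducibility of $V_\ell A$ as a $\ol G_\ell$-module, forces the derived group $\ol G_\ell^\tn{der}$ to be nontrivial semisimple and to act on $V_\ell A$ without nonzero invariants --- otherwise those invariants would form a proper nonzero $\ol G_\ell$-submodule, contradicting irreducibility. Because semisimple groups in characteristic zero are perfect, the derived series of the $\ell$-adic Lie group $\rho_\ell(G_K)$ stabilizes at the level of Lie algebras: $\rho_\ell(G_K^{(n)})$ remains open in, hence Zariski-dense in, $\ol G_\ell^\tn{der}(\Q_\ell)$ for every $n\geq1$. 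Consequently $(V_\ell A)^{G_K^{(n)}}=(V_\ell A)^{\ol G_\ell^\tn{der}}=0$, proving (I) for all $\ell$.

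For (II), I would use the mod-$\ell$ analogue: for all but finitely many $\ell$, the image $\ol\rho_\ell(G_K)\subseteq\tn{GL}(A[\ell])$ contains a subgroup $S_\ell$ --- essentially the $\F_\ell$-points of a smooth integral model of $\ol G_\ell^\tn{der}$ --- which, after excluding finitely more primes, is perfect (by the classification of finite groups of Lie type) and acts on $A[\ell]$ without nonzero invariants. A standard profinite compactness argument gives $\ol\rho_\ell(G_K^\solv)=\bigcap_n\ol\rho_\ell(G_K^{(n)})=\ol\rho_\ell(G_K)^{(\infty)}$, the perfect core of the finite group $\ol\rho_\ell(G_K)$; perfectness places $S_\ell$ inside this core, so $A[\ell]^{G_K^\solv}\subseteq A[\ell]^{S_\ell}=0$ for almost all $\ell$. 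Since $G_K^\solv\subseteq G_K^{(n)}$, the same vanishing holds a fortiori for $G_K^{(n)}$; together with (I) this yields (a), and alone it yields (b).

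The main obstacle, I anticipate, is the mod-$\ell$ input just invoked. For elliptic curves, the existence of the perfect subgroup $S_\ell\subseteq\ol\rho_\ell(G_K)$ acting without invariants on $A[\ell]$ for almost all $\ell$ follows from Serre's open image theorem, but for a general non-CM simple Abelian variety of higher dimension it requires much deeper results on mod-$\ell$ Galois images --- Serre's work on Frobenius tori, Faltings' isogeny theorem, and refinements by Chi, Pink, Larsen-Pink, or Hui-Larsen --- to ensure $\ol\rho_\ell(G_K)$ is as large as permitted by the Mumford-Tate structure, uniformly in $\ell$. The $\ell$-adic part of the argument, by contrast, should be comparatively clean given Bogomolov and Faltings.
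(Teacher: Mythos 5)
Your overall architecture is the same as the paper's: reduce via Faltings and Bogomolov on the $\ell$-adic side and via Serre--Larsen--Pink--Wintenberger on the mod-$\ell$ side, and extract non-solvability from the derived subgroup of the relevant reductive group. But there are two genuine gaps, both stemming from the same oversight.

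First, for (I) you invoke ``irreducibility of $V_\ell A$ as a $\overline G_\ell$-module.'' This is false in general even after reducing to a geometrically simple non-CM factor: if, say, $A$ has real multiplication by a real quadratic field $F$ and $\ell$ splits in $F$, then $\mathrm{End}_K(A)\otimes\Q_\ell$ is a product and $V_\ell A$ decomposes. What Faltings actually gives is \emph{semisimplicity}, and the correct substitute is to show that for every nonzero $\Gamma$-stable subspace $W\subseteq V_\ell A$ the image of $\Gamma$ in $\mathrm{GL}(W)$ is non-abelian --- this is precisely \cite[Theorem~2]{zarhin}, which is the crucial noncommutativity input your argument never cites. (Equivalently: $(V_\ell A)^{\overline G_\ell^{\mathrm{der}}}$ is $\Gamma$-stable, and $\Gamma$ acts on it through the torus $\overline G_\ell/\overline G_\ell^{\mathrm{der}}$, hence abelianly; Zarhin forbids that for a nonzero subspace.) The paper's formulation --- reduce to showing that the image of $\Gamma$ on any nonzero $\Gamma$-stable subspace is not $n$-step solvable --- is precisely what sidesteps the irreducibility trap, and you lose nothing by adopting it, as your Bogomolov/Lie-algebra argument then applies verbatim to each such $W$.

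Second, for (II) you assert that $S_\ell=H_\ell(\F_\ell)$ is perfect for $\ell\gg 0$. This is also false in general: $H_\ell$ is the derived group of $G_\ell$, and there is no reason for it to be simply connected, so $H_\ell(\F_\ell)$ typically has a nontrivial abelian quotient of order up to $2^{\mathrm{rank}(H_\ell)}$ (Remark~\ref{rmk:fin-gps}). One must therefore pass to the perfect core $\mathcal D^\infty(H_\ell(\F_\ell))$, and then the step ``$A[\ell]^{S_\ell}=0$'' does \emph{not} follow directly from $A[\ell]^{H_\ell(\F_\ell)}=0$: passing to a finite-index subgroup can only enlarge the fixed space, and controlling it uniformly in $\ell$ is a real problem. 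This is exactly the issue that the paper's ultraproduct lemma is designed to solve, or, in the paper's main proof, is avoided entirely by lifting $W_\ell$ via idempotents to a $\Gamma$-stable $\Z_\ell$-lattice $\mathcal W$ so that $\Gamma_\ell\to\Gamma_{W_\ell}$ is induced by an algebraic quotient $G_\ell\to G_{W_\ell}$, after which it suffices to know that $H_{W_\ell}(\F_\ell)$ is itself non-solvable (rather than that its perfect core has no invariants). Also note that, as in (I), the input that $\Gamma_{W_\ell}$ is noncommutative for every nonzero $\Gamma$-stable $W_\ell\subseteq A[\ell]$ (and hence $H_{W_\ell}$ nontrivial) is \cite[Theorem~3]{zarhin}, not a direct consequence of the mod-$\ell$ big-image results you cite; those give the group-theoretic shape of $\Gamma_\ell$, but Zarhin is what rules out abelian action on a subspace.

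Your instinct that the hard input is on the mod-$\ell$ side is right, but the ``clean'' $\ell$-adic half already needs Zarhin's noncommutativity theorem and must drop the irreducibility assumption; and on the mod-$\ell$ side the passage from $H_\ell(\F_\ell)$ to a perfect subgroup requires either the idempotent-lifting reduction or the ultraproduct lemma.
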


\begin{rmk}
\label{rmk}
\mbox{}
\begin{enumerate}
\item
Our hypothesis on $A$ is stronger than Zarhin's.
This is unavoidable:
If, for example, $E$ is an elliptic over $\Q$ with CM over $\ol\Q$, then $E$ does not have CM over $\Q$, but nonetheless all its torsion is defined over $\Q^{2\tn{-}\mathrm{solv}}=(\Q^\Ab)^\Ab$ since its CM is defined over a quadratic extension of $\Q$.
%However, we do not address the question of whether some slightly weaker condition might suffice. For example, if $A_{K^\solv}$ has no isogeny-factor with CM defined over $K^\solv$, do the conclusions of Theorem \ref{thm:main} hold for $A$?
\item
There may be infinitely many torsion points of $A$ over $K^\solv$:
If the $\mr{Gal}(\ol K\ov K)$-action on the $\ell$-adic Tate module of $A$ is pro-$\ell$, e.g.\ if $A$ has all its $\ell$-torsion defined over $K$, then for \tit{al{}l} $n\geq0$, every $\ell^n$-torsion point of $A$ is defined over $K^\solv$, since any pro-$\ell$ group is prosolvable.
\item
A few months after this note was posted to the arXiv, Davide Lombardo and Tam\'as Szamuely kindly alerted the author that they had independently obtained part (a) of Theorem \ref{thm:main} as a consequence of a result that applies to general \'etale cohomology groups \cite[Corollary 1.4]{ls}; we later noticed that part (b) follows from their arguments as well \cite[Remark 3.4(2)]{ls}.
Also, as one of the referees pointed out, part (b) can be deduced from the (very general) main theorems of the paper \cite{hui}.
Nevertheless, I hope that the short proof presented here will be useful for those interested primarily in the case of torsion of Abelian varieties.
\end{enumerate}
\end{rmk}

\begin{ontheproof}
The principal input is the main theorems of \cite{zarhin}, bolstered by results of Serre, Larsen--Pink, and Wintenberger on the images of mod-$\ell$ Galois representations (explained in \cite[Proposition 2.10]{zywina}).
\end{ontheproof}

\begin{acknowledgements}
I warmly thank Tyler Genao and John Yin for introducing me to Ribet's and Zarhin's theorems.
I also thank the referees, whose comments greatly improved the quality of this document and, in particular, pointed out a mistake in an earlier version of the proof.
This material is based upon work supported by the National Science Foundation under Grant No.\ DMS-2231565.
\end{acknowledgements}

\begin{notation}
Throughout, $K$ is a fixed number field with absolute Galois group $\Gamma\ceq\mr{Gal}(\ol K\ov K)$, and $A$ is a fixed Abelian variety over $K$.
For a prime $\ell$, the group of $\ell^n$-torsion points of $A$ over $K$ is denoted by $A[\ell^n]$, and we will use the standard notation $\mr T_\ell(A)\ceq\varprojlim\!{}_\ell\,A[\ell^n]$ and $\mr V_\ell(A)\ceq\mr T_\ell(A)\otimes_{\Z_\ell}\Q_\ell$ for the $\ell$-adic Tate modules of $A$.

For a (pro)finite group $\Pi$, we denote by $\Pi'$ and $\Pi^{(n)}$, respectively, its derived and its $n$th derived subgroups (which, recall, are the \tit{closures} of the groups generated by commutators or $n$-fold commutators).
The fields $K^\nsolv$ and $K^\solv$ are as defined above.
Note that, by definition, $\mr{Gal}(\ol K\ov K^\nsolv)=\Gamma^{(n)}$ and $\mr{Gal}(\ol K\ov K^\solv)=\bigcap_n\Gamma^{(n)}$.

Finally, the proof relies on the theory of algebraic groups, and we will be very careful to always distinguish between an algebraic group (a scheme) and its group of points with values in some ring (an abstract group).
\end{notation}

\section{Proof of Theorem \ref{thm:main}}

\noindent
We are going to prove (b) plus the following statement (a$'$), which together imply (a):
\begin{enumerate}
\item[(a$'$)]
$A$ has only finitely many torsion points of $\ell$-power order over $K^\nsolv$ for all $\ell$ and $n$.
\end{enumerate}
By enlarging $K$, replacing $A$ by an isogenous Abelian variety, and arguing for the factors of $A$ individually, we may and do assume that $A$ is absolutely simple and that for \tit{every} $\ell$, the image of $\Gamma$ in $\mr{GL}(\mr V_\ell(A))$ has connected Zariski closure (i.e.\ $K_A^\mr{conn}=K$, in the notation of \cite[\textsection1]{zywina}).
The latter assumption is justified by work of Serre \cite[2.2.3]{serre-resume} (see also \cite[Proposition 6.14]{lp}).

\begin{proof}[Proof of \tn{(a$'$)}]
Fix $\ell$ and $n$.
By an easy argument using the fact that an inverse limit of nonempty sets is nonempty, it is enough to show that $\mr V_\ell(A)^{\Gamma^{(n)}}=0$.

Since $\Gamma^{(n)}$ is a normal subgroup of $\Gamma$, the $\Q_\ell$-subspace $\mr V_\ell(A)^{\Gamma^{(n)}}$ of $\mr V_\ell(A)$ is stable under the action of $\Gamma$.
Moreover, $\Gamma$ acts on $\mr V_\ell(A)^{\Gamma^{(n)}}$ through its quotient $\Gamma/\Gamma^{(n)}$, which is $n$-step-solvable.
It therefore further suffices to show that for any nonzero $\Gamma$-invariant $\Q_\ell$-subspace $W$ of $\mr V_\ell(A)$, the image of $\Gamma$ in $\mr{GL}(W)$ is not $n$-step-solvable.

Fix $W$ as above.
Let $\Gamma_W$ be the image of $\Gamma$ in $\mr{GL}(W)$, let $G_W$ be the Zariski-closure of $\Gamma_W$ in $\mr{GL}_W$, and let $H_W$ be the derived subgroup of $G_W$ (in the sense of algebraic groups).
By hypothesis, $G_W$ and $H_W$ are connected, and they are reductive by Faltings's theorem.
By \cite[Theorem 2]{zarhin}, $\Gamma_W$ is noncommutative, so $H_W$ is nontrivial.
The ``$n$-fold commutator'' map $(G_W)^{\times2^n}\to H_W$ is surjective; indeed, it is surjective when restricted to $(H_W)^{\times2^n}$, because every element of $H_W(\ol{\Q_\ell})$ is a commutator \cite{ree}.
Since $\Gamma_W$ is Zariski-dense in $G_W$, we deduce that $(\Gamma_W)^{(n)}$ is Zariski-dense in $H_W$, and in particular nontrivial.
\end{proof}

\begin{rmk}
It follows from Bogomolov's ``open image'' theorem \cite[Theorem 2]{bogomolov} and an easy argument with Lie algebras (see \cite[p.\ 1909, (1)]{cadoret-kret})  that $(\Gamma_W)^{(n)}$ is in fact an \tit{open} subgroup of $H_W(\Q_\ell)$ (given the $\ell$-adic topology) for all $n$.
\end{rmk}

\noindent
The proof of the second part of Theorem \ref{thm:main} will use the following well-known fact.

\begin{lem}
\label{lem:gt}
Let $\ell\geq5$, let $H$ be a nontrivial connected semisimple group over $\F_\ell$, let $\tilde H\to H$ be its universal cover, and let $\varphi\cln\tilde H(\F_\ell)\to H(\F_\ell)$ denote the map on $\F_\ell$-points.
Then
\begin{enumerate}
\item
$\mr{Img}(\varphi)$ is equal to $H(\F_\ell)'$, the derived subgroup of $H(\F_\ell)$.
\item
$\mr{Img}(\varphi)$ is not solvable.
\end{enumerate}
\end{lem}

\begin{proof}
Since $\tilde H$ is quasisplit by Lang's theorem, part (a) follows from \cite[Corollaire 6.5 and Remarque 6.6]{borel-tits}.
Now $\tilde H$ is a product of groups of the form $\mr{Res}_{\F_{\ell^n}|\F_\ell}(S)$ for some $n$ and simply connected and geometrically almost simple group $S$ over $\F_{\ell^n}$ \cite[Chapter X, \S1.3]{cf}.
When $\ell\geq5$, each group of $\F_\ell$-points $\mr{Res}_{\F_{\ell^n}|\F_\ell}(S)(\F_\ell)=S(\F_{\ell^n})$ is perfect and simple modulo its center---these are the Chevalley and Steinberg groups \cite[Theorem 24.17]{mt}.
So (c) holds because the kernel of $\tilde H(\F_\ell)\to H(\F_\ell)$ is central.
\end{proof}

\begin{rmk}
In the setting of Lemma \ref{lem:gt}, we in fact have $[H(\F_\ell):\mr{Img}(\varphi)]\leq2^{\mr{rank}(H)}$, as explained in \cite[Remark 3.6]{nori}.
Moreover, the kernel of $\tilde H(\F_\ell)\to H(\F_\ell)$ is the whole of the center of $\tilde H(\F_\ell)$ \cite[Corollary 24.13]{mt}.
\end{rmk}

\begin{proof}[Proof of Theorem \tn{\ref{thm:main}(b)}]
Take $\ell\gg0$.
By the same argument as in the second paragraph of the proof of \tn{(a$'$)}, it suffices to verify the following assertion:
For any nonzero $\Gamma$-stable $\F_\ell$-subspace $W_\ell$ of $A[\ell]$, the image of $\Gamma$ in $\mr{GL}(W_\ell)$ is not solvable.
We may and do further assume that $\Gamma$ acts irreducibly on $W_\ell$.

Let $\Gamma_\ell$ denote the image of $\Gamma$ in $\mr{GL}(A[\ell])$, and let $\cG$ be the Zariski-closure of the image of $\Gamma$ in $\mr{GL}_{\mr T_\ell(A)}$ (that is, the smallest closed subscheme of $\mr{GL}_{\mr T_\ell(A)}$ such that $\cG(\Z_\ell)$ includes the image of $\Gamma$).
The latter is by definition a linear algebraic group over $\Z_\ell$, and by work of Serre, Larsen--Pink, and Wintenberger (we again refer to the exposition of \cite[Proposition 2.10]{zywina}), the following holds, since $\ell\gg0$:
The special fiber $G_\ell$ of $\cG$ is a connected reductive group over $\F_\ell$, and $G_\ell(\F_\ell)'\subseteq\Gamma_\ell\subseteq G_\ell(\F_\ell)$.

Let $W_\ell$ be as above, and let $\Gamma_{W_\ell}$ denote the image of $\Gamma$ in $\mr{GL}(W_\ell)$.
To extract information about $\Gamma_{W_\ell}$ from the previous paragraph, we will show that the natural map $\Gamma_\ell\onto\Gamma_{W_\ell}$ is induced by an \tit{algebraic} quotient of $G_\ell$.
By Faltings's theorem and its mod-$\ell$ variant (see \cite[Chapter IV, \textsection4]{faltings-wustholz} or \cite[Corollary 5.4.5]{zarhin-finiteness}), the horizontal arrows of the following commutative square are isomorphisms, since $\ell\gg0$:
\[
\begin{tikzcd}
\mr{End}_K(A)\otimes_\Z\Z_\ell\arrow[r,"\sim"]\arrow[d,two heads]&
\mr{End}_\Gamma(\mr T_\ell(A))\arrow[d]\\
\mr{End}_K(A)\otimes_\Z\F_\ell\arrow[r,"\sim"]&
\mr{End}_\Gamma(A[\ell]).
\end{tikzcd}
\]
Lifting the idempotent element of $\mr{End}_\Gamma(A[\ell])$ corresponding to $W_\ell$ along the surjection $\mr{End}_K(A)\otimes_\Z\Z_\ell\onto\mr{End}_K(A)\otimes_\Z\F_\ell$,\footnote{
It is well-known (see, for example, the more general \cite[Lemma 5.4]{jannsen}) that if $R$ is a not-necessarily-commutative ring, and $I\subseteq R$ is a nilpotent two-sided ideal, then the map $R\onto R/I$ induces a surjection on idempotents.
Apply this to the maps $\mr{End}_K(A)\otimes_\Z\Z/\ell^n\onto\mr{End}_K(A)\otimes_\Z\F_\ell$; we may pass to the limit since $\mr{End}_K(A)$ is a finitely generated free $\Z$-module.
}
we in turn lift $W_\ell$ to a $\Gamma$-stable $\Z_\ell$-subspace $\mc W$ of $\mr T_\ell(A)$.
This yields a quotient map $\cG\onto\cG_{\mc W}$, the latter being the Zariski closure of the image of $\Gamma$ in $\mr{GL}_{\mc W}$.

Let $G_{W_\ell}$ denote the special fiber of $\cG_{\mc W}$ and $H_{W_\ell}$ the derived subgroup of $G_{W_\ell}$.
By \cite[Theorem 3]{zarhin}, $\Gamma_{W_\ell}$ is a noncommutative subgroup of $G_{W_\ell}(\F_\ell)$, so $H_{W_\ell}$ is nontrivial.
The quotient map $G_\ell\onto G_{W_\ell}$ (the special fiber of $\mc G\onto\mc G_{\mc W}$) restricts to a surjective map $H_\ell\onto H_{W_\ell}$; let $\psi\cln\tilde H_\ell\to\tilde H_{W_\ell}$ denote the corresponding map of universal covers, which is again surjective.
Moreover, all these groups are connected, since $G_\ell$ is so, as noted above.
Let $\mr{Ker}(\psi)^\circ$ denote the neutral component of $\mr{Ker(\psi)}$.
Then $\tilde H_\ell/{\mr{Ker}(\psi)^\circ}$ is a finite cover of $\tilde H_{W_\ell}$, so $\mr{Ker}(\psi)=\mr{Ker}(\psi)^\circ$, because $\tilde H_{W_\ell}$ is simply connected.
Thus, by Lang's theorem, the map $\tilde H_\ell(\F_\ell)\to\tilde H_{W_\ell}(\F_\ell)$ is surjective.
In summary, we have the following diagram:
\[
\begin{tikzcd}
H_\ell(\F_\ell)'\arrow[r,phantom,"\subseteq"]&
G_\ell(\F_\ell)'\arrow[r,phantom,"\subseteq"]&
\Gamma_\ell\cap H_\ell(\F_\ell)\arrow[r,phantom,"\subseteq"]\arrow[d]&
H_\ell(\F_\ell)\arrow[d]&\tilde H_\ell(\F_\ell)\arrow[l]\arrow[d,two heads]\\
&&
\Gamma_{W_\ell}\cap H_{W_\ell}(\F_\ell)\arrow[r,phantom,"\subseteq"]&
H_{W_\ell}(\F_\ell)&
\tilde H_{W_\ell}(\F_\ell).\arrow[l]
\end{tikzcd}
\]
From the diagram and Lemma \ref{lem:gt}(a) we deduce that $\mr{Img}(\tilde H_\ell(\F_\ell)\to H_\ell(\F_\ell))\subseteq\Gamma_\ell$;
therefore, by the surjectivity, $\mr{Img}(\tilde H_{W_\ell}(\F_\ell)\to\tilde H_{W_\ell}(\F_\ell))\subseteq\Gamma_{W_\ell}$;
and at last we see, from Lemma \ref{lem:gt}(b), that $\Gamma_{W_\ell}$ is not solvable.
\end{proof}

\bibliographystyle{amsalpha}
\bibliography{refs.bib}

\end{document}